\newtheorem{thm}{Theorem}[section]
\newtheorem{lem}[thm]{Lemma}
\newtheorem{cor}[thm]{Corollary}
\theoremstyle{definition}
\newcommand{\R}{\mathbb{R}}
\newcommand{\C}{\mathbb{C}}
\newcommand{\<}{\langle}
\newcommand{\p}{\psi}
\newcommand{\f}{\varphi}
\newcommand{\g}{\mathfrak{g}}
\newcommand{\D}{\mathscr{D}}
\newcommand{\Z}{\mathbb{Z}}
\newcommand{\eval}[2][\right]{\relax
\ifx#1\right\relax \left.\fi#2#1\rvert}
\newcommand{\iso}{\xrightarrow{\ \sim\ }}
\renewcommand{\g}{\mathfrak{g}}
\renewcommand{\>}{\rangle}
\renewcommand{\a}{\mathfrak a}
\renewcommand{\to}{\longrightarrow}
\renewcommand{\H}{\mathcal H}
\DeclareMathOperator{\id}{id}
\DeclareMathOperator{\Ima}{Im}
\title{\Huge{\textbf{On Surjectivity of Invariant Differential Operators}}}
\author{Thomas Hjortgaard Danielsen}\date{}
\begin{document}

\maketitle

\begin{abstract}By proving a topological Paley-Wiener Theorem for Riemannian symmetric spaces of non-compact type, we show that a non-zero
invariant differential operator is a homeomorphism from the space of test functions
onto its image and hence surjective when extended to the space of
distributions. \end{abstract}
\vspace{0.7cm}

\section{Introduction}\label{sec:intro}
Let $\D(\R^n)$ be the space of compactly supported smooth functions on $\R^n$. Define the Euclidean Paley-Wiener space $\H^R(\C^n)$ to be the space of holomorphic maps $\f:\C^n\to\C$ such that
$$
|\f(\lambda)|\le C_N e^{R|\Ima\lambda|}(1+|\lambda|)^{-N}.
$$
This is topologized by the seminorms $\interleave \f\interleave_N$ which are the smallest constants $C_N$ for which the estimates hold. By the Paley-Wiener Theorem, the Euclidean Fourier transform
$$
\widetilde f(\xi):=\int_{\R^n}f(x)e^{-ix\cdot\xi}dx
$$
is a linear homeomorphism $\D(\R^n)\to \bigcup_{R>0}\H^R(\C^n)$ when the spaces are given the inductive limit topology.

Now let $D\neq 0$ be a constant coefficient differential operator on $\R^n$. It is well-known that conjugation of such a differential operator with the Fourier transform is just multiplication by a (non-zero) polynomial, $P_D$. This multiplication map turns out to be a homeomorphism onto its image, and consequently this holds also for $D$. An immediate consequence of this is that the differential operator is surjective on the space of distributions and hence always admit weak solutions (for more on this see \cite{55} Chapter VII).

In the article \cite{1012} it is stated that this result may be generalized to invariant differential operators on symmetric spaces of non-compact type. However the central arguments are either rather sketchy (\cite{1012}, Lemma 8) or non-existing (\cite{1012}, Theorem 7). In this article we remedy this by providing a different approach.

\section{Notation}
First we introduce some notation. Let $X$ be a Riemannian symmetric space of non-compact type, i.e. a quotient $G/K$ where $G$ is a semisimple non-compact Lie-group with finite center and $K$ is a maximal compact subgroup. The Lie algebra $\g$ has a Cartan decomposition $\g=\mathfrak k\oplus\mathfrak p$ and we pick in $\mathfrak p$ a maximally abelian subalgebra $\a$. Define $M:=Z_K(\a)$ and $B:=K/M$ and let $\a_\C^*$ denote the complexification of the dual of $\a$. Let $\Sigma\subseteq \a^*$ denote the set of restricted roots w.r.t. $(\g,\a)$ and let $\Sigma^+$ denote the set of positive roots relative to a fixed Weyl chamber. Put $\rho:=\frac12\sum_{\lambda\in\Sigma^+}\lambda$.

We let $\D(X)$ denote the set of test functions ($C^\infty$-functions with compact support) equipped with its usual inductive limit topology, and let $\D_R(X)$ denote the closed subspace of test functions whose support is contained in the closed $R$-ball $\overline{B_R(eK)}$ around $eK\in X$.

The Fourier transform $\widetilde f:\a_\C^*\times B\to \C$ of a function $f\in \D(X)$ is defined by (see e.g. \cite{47} Chapter III)
$$
\widetilde f(\lambda,b):=\int_X f(x)e^{(-i\lambda+\rho)A(x,b)}dx
$$
where $A(gK,kM):=A(k^{-1}g)$ with $A:G\to \a$ being the Iwasawa projection from the $NAK$-decomposition of $G$.

Let $\H^R(\a_\C^*\times B)$ denote the space of smooth functions $\p:\a_\C^*\times B\to \C$ which are holomorphic on $\a_\C^*$ and which satisfy the following growth condition
\begin{equation}\label{eq:growthcond1}
\forall N\in \Z_{>0}\:\exists C_N\:\forall\lambda,b:\ |\p(\lambda,b)|\le C_N e^{R|\Ima\lambda|}(1+|\lambda|)^{-N}.
\end{equation}

Furthermore, we denote by $\H^R(\a_\C^*\times B)_W$ the subset of functions satisfying the following Weyl invariance for each $s\in W$:
\begin{equation}\label{eq:Weylinv}
\forall x\in X\,\forall\lambda\in\a_\C^*:\quad \int_B e^{(is\cdot\lambda+\rho)A(x,b)}\p(s\cdot\lambda,b)db=\int_B e^{(i\lambda+\rho)A(x,b)}\p(\lambda,b)db.
\end{equation}

\section{A Topological Paley-Wiener Theorem}
First we need a topological Paley-Wiener Theorem and in order to do so, we have to topologize the space $\H^R(\a_\C^*\times B)_W$. For this, introduce the space $\H^R(\a_\C^*, L^2(B))$ to be the space consisting of holomorphic maps $\p:\a_\C^*\to L^2(B)$ satisfying
$$
\|\p(\lambda)\|_{L^2(B)}\le C_N e^{R|\Ima\lambda|}(1+|\lambda|)^{-N}
$$
for all $N$. We define $\interleave\p\interleave_N$ to be the smallest such constant $C_N$, and we topologize $\H^R(\a_\C^*, L^2(B))$ by this family of seminorms. This turns it into a Fréchet space. The Weyl invariance \eqref{eq:Weylinv} still makes sense in this generalized setting, and thus we define $\H^R(\a_\C^*, L^2(B))_W$ to be the subset of Weyl invariant elements. This is a closed subspace and hence a Fréchet space.

We have an obvious inclusion $\H^R(\a_\C^*\times B)_W\to \H^R(\a_\C^*, L^2(B))_W$ and this inclusion turns out to be surjective:

\begin{lem}
It holds that $\H^R(\a_\C^*, L^2(B))_W=\H^R(\a_\C^*\times B)_W$ as vector spaces.
\end{lem}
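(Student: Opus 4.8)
The plan is to show that any $\p\in\H^R(\a_\C^*,L^2(B))_W$ is automatically given by a genuinely smooth function on $\a_\C^*\times B$ satisfying the pointwise bound \eqref{eq:growthcond1}, so that the inclusion is onto. The natural strategy is to use elliptic regularity in the $B$-variable: $B=K/M$ is a compact homogeneous space, so it carries an invariant Laplacian (or the Casimir of $K$) whose powers control the $C^k(B)$-norms via Sobolev embedding. Concretely, one writes the $L^2(B)$-valued holomorphic map $\p$ in terms of its $K$-type expansion and observes that, because the individual matrix coefficients $b\mapsto e^{(\pm i\lambda+\rho)A(x,b)}$ appearing in \eqref{eq:Weylinv} are themselves real-analytic in $b$, the Weyl-invariance is an identity that makes sense termwise; the real work is the regularity in $b$, not the Weyl symmetry, which just gets carried along.

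The key steps, in order, would be: (1) Recall that the map $(\lambda,b)\mapsto e^{(-i\lambda+\rho)A(x,b)}$, and more to the point the relevant invariant differential operators $\Omega_B$ on $B$, act on $L^2(B)$-valued holomorphic functions; show that if $\p\in\H^R(\a_\C^*,L^2(B))$ then $\Omega_B^m\p\in\H^R(\a_\C^*,L^2(B))$ for every $m$, with seminorm bounds, by differentiating under the (weakly convergent) integral and using that $\Omega_B$ is a bounded-below self-adjoint operator with discrete spectrum. This is where a Cauchy-estimate argument in $\lambda$ also enters, to keep holomorphy. (2) Apply the Sobolev embedding $H^s(B)\hookrightarrow C^k(B)$ for $s>k+\tfrac12\dim B$: since $\|\p(\lambda)\|_{H^s(B)}$ is dominated by a finite sum of the $\|\Omega_B^m\p(\lambda)\|_{L^2(B)}$, we get, for each fixed $\lambda$, a function $b\mapsto\p(\lambda,b)$ in $C^k(B)$ with $C^k$-norm bounded by $C_N e^{R|\Ima\lambda|}(1+|\lambda|)^{-N}$. (3) Upgrade joint smoothness and holomorphy in $\lambda$: the $C^k(B)$-valued map $\lambda\mapsto\p(\lambda,\cdot)$ is again holomorphic (a holomorphic map into $L^2$ that lands in the closed subspace $C^k$ with locally bounded $C^k$-norm is holomorphic into $C^k$), and letting $k\to\infty$ yields a function smooth in $b$, holomorphic in $\lambda$, jointly smooth, with the bound \eqref{eq:growthcond1}. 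Hence $\p\in\H^R(\a_\C^*\times B)$, and Weyl-invariance passes through unchanged since \eqref{eq:Weylinv} is literally the same equation.

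I expect the main obstacle to be step (1): justifying that differentiation by the invariant operator $\Omega_B$ on $B$ commutes with the $L^2(B)$-valued holomorphic structure and preserves the exponential-type growth bound, uniformly in $\lambda$. One has to be careful that $\p$ is only assumed holomorphic as an $L^2(B)$-valued map, so "applying $\Omega_B$'' must be set up either spectrally (decompose $L^2(B)=\widehat{\bigoplus}_\tau V_\tau$ into $K$-isotypic components, on which $\Omega_B$ acts by the scalar $\langle\tau,\tau+2\delta_K\rangle$, and note each component $\p_\tau(\lambda)$ is holomorphic with the same growth bound) or by duality against smooth test densities on $B$. The spectral route is cleanest: it immediately gives $\|\Omega_B^m\p(\lambda)\|_{L^2}\le(\text{const})\cdot\sup_\tau(1+\|\tau\|^2)^{m}\,|?|$—no, rather one reverses this, bounding $\sum_\tau(1+\langle\tau,\tau+2\delta_K\rangle)^{s}\|\p_\tau(\lambda)\|^2$ directly once one knows it is finite, which follows because $\p(\lambda)$ is a single $L^2$ element for each $\lambda$ but one needs the finiteness to be \emph{uniform} with the right $\lambda$-dependence. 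Resolving that uniformity—probably via a closed-graph or uniform-boundedness argument over the holomorphic family, or by noting the Fourier-transform characterization forces each $\p_\tau$ to decay in $\tau$ at a rate independent of $\lambda$—is the crux, and everything else (Sobolev embedding, promoting holomorphy, carrying Weyl-invariance) is routine.
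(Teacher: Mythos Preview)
Your approach has a genuine gap at exactly the point you flag as ``the crux'': the claim in step~(1) that $\Omega_B^m\p\in\H^R(\a_\C^*,L^2(B))$ for every $m$ is \emph{false} for general elements of $\H^R(\a_\C^*,L^2(B))$. Take any $g\in L^2(B)\setminus C^\infty(B)$ and any nonzero $\f\in\H^R(\a_\C^*)$; then $\p(\lambda):=\f(\lambda)\,g$ lies in $\H^R(\a_\C^*,L^2(B))$ but its $K$-type components $\p_\tau(\lambda)=\f(\lambda)g_\tau$ have no decay in $\tau$ beyond what $g$ already had, so no amount of closed-graph or uniform-boundedness reasoning over the holomorphic family in $\lambda$ can manufacture $H^s(B)$-regularity. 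You explicitly say ``the real work is the regularity in $b$, not the Weyl symmetry, which just gets carried along''; this is precisely backwards. The Weyl invariance \eqref{eq:Weylinv} is the \emph{only} hypothesis that distinguishes $\p$ from such a counterexample, and your outline never indicates a mechanism by which that integral identity would force decay of the $K$-isotypic pieces.

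The paper's argument avoids this difficulty entirely by going through $X$ rather than through $B$. One applies the inverse Fourier transform on $X$ to $\p$, obtaining
\[
f(x)=\int_{\a^*}\big\langle \p(\lambda),\,e^{(-i\lambda+\rho)A(x,\cdot)}\big\rangle_{L^2(B)}\,|c(\lambda)|^{-2}\,d\lambda,
\]
which is manifestly smooth in $x$. The Weyl invariance now enters exactly where it should: it is precisely the condition needed in the proof of the bijective Paley-Wiener theorem (\cite{47}, pp.~278--280) to guarantee that $f$ is supported in the closed $R$-ball and that $\widetilde f=\p$ a.e. Since $\widetilde f$ is the Fourier transform of an element of $\D_R(X)$, it automatically lies in $\H^R(\a_\C^*\times B)_W$, furnishing the required smooth representative. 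In short, the $B$-regularity is obtained for free from the smoothness of $b\mapsto e^{(-i\lambda+\rho)A(x,b)}$ once one has passed through a compactly supported smooth function on $X$; trying to establish it directly on $B$, without routing through $X$, cannot succeed without a substantive use of \eqref{eq:Weylinv} that your proposal does not supply.
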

\begin{proof}
For $\p\in \H^R(\a_\C^*, L^2(B))_W$ define
$$
f(x):=\int_{\a^*\times B}\p(\lambda,b)e^{(i\lambda+\rho)A(x,b)}|c(\lambda)|^{-2}dbd\lambda=\int_{\a^*}\<\p(\lambda),e^{(-i\lambda+\rho)A(x,\cdot)}\>_{L^2(B)}|c(\lambda)|^{-2}d\lambda.
$$
Obviously, $f$ is a smooth function. By examining the proof of bijectivity of $\mathcal F:\D(X)\to \H(\a_\C^*\times B)_W$ (e.g. in \cite{47} p. 278--280), it is seen that $f$ is supported in the closed $R$-ball $B_R(eK)$ and that $\widetilde f-\p=0$ almost everywhere, and thus $f$ is a smooth representative of $\f$. Furthermore $\widetilde f$ satisfies the stronger growth condition \eqref{eq:growthcond1} and hence $f\in\H^R(\a_\C^*\times B)_W$.
\end{proof}

Now $\H^R(\a_\C^*\times B)_W$ inherits the topology from $\H^R(\a_\C^*, L^2(B))_W$ (given by the seminorms $\interleave\cdot\interleave_N$), and hence it becomes a Fréchet space. Furthermore we define
$$
\H(\a_\C^*\times B)_W:=\bigcup_{R\in \Z_{>0}}\H^R(\a_\C^*\times B)_W
$$
and give it the inductive limit topology.

\begin{thm}[\textbf{Topological Paley-Wiener}]
The Fourier transform $\mathcal F:\D(X)\to \H(\a_\C^*\times B)_W$ is a linear homeomorphism. Furthermore $\widetilde f\in \H^R(\a_\C^*\times B)_W$ if and only if $f\in \D_R(X)$.
\end{thm}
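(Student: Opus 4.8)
The bijectivity of $\mathcal{F}$ is essentially already available: Helgason's Paley-Wiener theorem gives bijectivity onto $\mathcal{H}(\a_\C^*\times B)_W$ as sets, and the equivalence $\widetilde f\in\mathcal{H}^R\iff f\in\mathcal{D}_R$ is the support statement established in the Lemma (together with the classical direction showing the transform of an $R$-supported function satisfies the $R$-type estimates). So the real content here is the topological statement: that $\mathcal{F}$ and its inverse are continuous. By the standard theory of $LF$-spaces (strict inductive limits of Fréchet spaces), it suffices to show for each fixed $R$ that $\mathcal{F}$ restricts to a homeomorphism $\mathcal{D}_R(X)\to\mathcal{H}^R(\a_\C^*\times B)_W$; continuity on the inductive limits then follows formally, and since both are $LF$-spaces, the inverse is automatically continuous once $\mathcal{F}$ is a continuous bijection — but I will in fact exhibit the inverse explicitly and bound it directly, since that is cleaner and more robust.

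First I would prove continuity of $\mathcal{F}:\mathcal{D}_R(X)\to\mathcal{H}^R(\a_\C^*, L^2(B))_W$. The topology on $\mathcal{D}_R(X)$ is given by the sup-norms of all derivatives. Starting from $\widetilde f(\lambda,b)=\int_X f(x)e^{(-i\lambda+\rho)A(x,b)}\,dx$, I would use that $x$ ranges over the compact ball $\overline{B_R(eK)}$, on which $|A(x,b)|$ is bounded by (a constant times) $R$, giving the exponential factor $e^{R|\Ima\lambda|}$ directly. To get the polynomial decay $(1+|\lambda|)^{-N}$, I would integrate by parts against a $G$-invariant differential operator — concretely, exploit that the exponential $e^{(-i\lambda+\rho)A(x,b)}$ is (up to the $\rho$-shift) an eigenfunction of the Laplace–Beltrami operator $L$ on $X$ with eigenvalue a polynomial $q(\lambda)$ of degree $2$ with $q(\lambda)\to\infty$; then $q(\lambda)^m\widetilde f(\lambda,b)=\widetilde{(L^m f)}(\lambda,b)$, and the crude $L^\infty$ bound on $L^m f$ (controlled by finitely many $\mathcal{D}_R$-seminorms of $f$) yields $\interleave\widetilde f\interleave_N\le C\sum_{|\alpha|\le k(N)}\|\partial^\alpha f\|_\infty$ after taking $L^2(B)$-norms (harmless, as $B$ is compact). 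This is the continuity of $\mathcal{F}$ on each level.

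For the inverse, I would take the explicit inversion formula appearing in the proof of the Lemma, $f(x)=\int_{\a^*}\langle\p(\lambda),e^{(-i\lambda+\rho)A(x,\cdot)}\rangle_{L^2(B)}|c(\lambda)|^{-2}\,d\lambda$, and estimate its $\mathcal{D}_R$-seminorms. Applying an invariant differential operator $D$ on $X$ to $f$ pulls out, under the integral, multiplication of $\p(\lambda)$ by the corresponding Weyl-invariant polynomial symbol $p_D(\lambda)$; bounding $\|D f\|_\infty$ then reduces to showing $\int_{\a^*}|p_D(\lambda)|\,\interleave\p\interleave_N\,(1+|\lambda|)^{-N}|c(\lambda)|^{-2}\,d\lambda<\infty$ with a constant times $\interleave\p\interleave_N$, which holds once $N$ is chosen larger than $\deg p_D$ plus the polynomial growth order of $|c(\lambda)|^{-2}$ (the Harish-Chandra $c$-function has at most polynomial growth on $\a^*$). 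The main obstacle — and the step needing the most care — is precisely this control of the Plancherel density $|c(\lambda)|^{-2}$: I must cite or re-derive the polynomial bound $|c(\lambda)|^{-2}\le C(1+|\lambda|)^{d}$ for $\lambda\in\a^*$ (Helgason–Gangolli), so that the defining seminorms of $\mathcal{H}^R$ really do dominate all the $\mathcal{D}_R$-seminorms of the inverse transform. Once both directions are continuous at each level $R$, passing to the inductive limit and invoking that a continuous bijection between $LF$-spaces with continuous inverse is a homeomorphism finishes the proof; the support equivalence is then read off from the level-wise statement.
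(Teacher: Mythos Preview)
Your forward direction (continuity of $\mathcal F:\D_R(X)\to\H^R(\a_\C^*\times B)_W$) is essentially identical to the paper's: both use the bound $|A(x,b)|\le R$ on $\overline{B_R(eK)}$ together with an invariant operator whose Fourier symbol is $(1+|\lambda|^2)^N$ to produce the polynomial decay. The genuine difference is how you handle the inverse. The paper does no work at all there: once $\mathcal F:\D_R(X)\to\H^R(\a_\C^*\times B)_W$ is a continuous bijection between Fr\'echet spaces, the Open Mapping Theorem makes it a homeomorphism immediately, and one then passes to the inductive limit. You instead estimate $\mathcal F^{-1}$ explicitly from the inversion formula, which forces you to invoke the polynomial bound $|c(\lambda)|^{-2}\le C(1+|\lambda|)^d$ on $\a^*$ and, implicitly, the fact that the sup-norms $\|D f\|_\infty$ for \emph{invariant} $D$ already generate the $C^\infty$-topology on $\D_R(X)$ (true by ellipticity of the Laplace--Beltrami operator and Sobolev embedding on the compact ball, but worth stating). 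Your route is correct and yields explicit constants; the paper's route is shorter and sidesteps both the Harish-Chandra $c$-function growth estimate and the elliptic-regularity step. Your throwaway remark that ``the inverse is automatically continuous once $\mathcal F$ is a continuous bijection'' between $LF$-spaces is true but subtler than the Fr\'echet case the paper actually uses; since you don't rely on it, this is harmless.
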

\begin{proof}
The bijectivity of $\mathcal F$ as well as the last claim is stated
and proved in \cite{Helg1} Theorem III.5.1.

Now we consider $\mathcal F:\D_R(X)\to \H^R(\a_\C^*\times B)_W$ for a given $R$. For $f\in \D_R(X)$ it is
straightforward to check the inequality for each $N$:
$$
\interleave\mathcal F f\interleave_N\le
C\sup_{\lambda\in\a_\C^*,b\in B}
e^{-R|\Ima\lambda|}\int_{\overline{B_R(eK)}}|Df(x)||e^{(-i\lambda+\rho)A(x,b)}|dx
$$
where $D$ is the invariant differential operator (of order $2N$) on $X$ corresponding to the invariant polynomial $(1+|\lambda|^2)^N$ (as in \eqref{eq:diag}) and where $C$ is a constant depending on $N$ and $R$. Since $x\in
\overline{B_R(eK)}$ we have by \cite{Helg2} p. 476 eq. (13) that
$|A(x,b)|\le R$ and hence we see that
$$
e^{-R|\Ima\lambda|}|e^{(-i\lambda+\rho)A(x,b)}|=e^{(\Ima\lambda+\rho)A(x,b)-R|\Ima\lambda|}\le
e^{\rho(A(x,b))}\le e^{R|\rho|}.
$$
Hence we get
$\interleave\mathcal Ff\interleave_N\le C\|f\|_{2N}$, where
$\|\cdot\|_{2N}$ is one of the standard seminorms on $\D_R(X)$, i.e.
$\mathcal F:\D_R(X)\to \H^R(\a_\C^*\times B)_W$ is continuous.

Thus the Fourier transform is a homeomorphism $\D_R(X)\iso\H^R(\a_\C^*\times B)_W$
since these spaces are Fréchet. Hence it is also a
homeomorphism when defined on $\D(X)$.
\end{proof}

\section{Consequences of the Paley-Wiener Theorem}

Now, let $D$ be a non-zero invariant differential operator. There exists a $W$-invariant polynomial $P_D\neq 0$ on $\a_\C^*$ (this is a consequence of \cite{45} Theorem II.4.6 and Lemma II.5.14 where we identify $\mathbb D(A)$ with $W$-invariant polynomials on $\a$) such that the following diagram commutes
\begin{equation}\label{eq:diag}
\xymatrix{\D_R(X)\ar[rr]^-\sim \ar[d]_D & &\H^R(\a_\C^*\times B)_W\ar[d]^{M_{P_{D}}}\\
\D_R(X)\ar[rr]^-\sim & & \H^R(\a_\C^*\times B)_W}
\end{equation}
where $M_{P_D}$ is multiplication by $P_D$. The first goal is to show that $M_{P_D}$ and hence $D$ are linear homeomorphisms onto their images. Injectivity of $M_{P_D}$ is clear by holomorphicity since $P_D\neq 0$.

Another payoff of considering $\H^R(\a_\C^*, L^2(B))$ rather than $\H^R(\a_\C^*\times B)$, is that
it admits a nice description as a tensor product. First, however, note that the spaces we defined in Section \ref{sec:intro} actually make sense for any finite-dimensional inner product space $V$ and its complexification $V_\C$. Thus we can define $\D(V)$, $\D_R(V)$, $\H^R(V_\C)$ and so on, as well as a Euclidean Fourier transform which will be a homeomorphism $\mathcal F:\D(V)\iso \H(V_\C)$. In the following we will take $V=\a^*$ with the Killing form as inner product.

Let $\H^R(\a_\C^*)\widehat\otimes L^2(B)$ denote the completion of the
algebraic tensor product in either the projective or injective
topology (they are both equal since $\H^R(\a_\C^*)\cong \D_R(\a^*)$ is
nuclear). Then

\begin{lem}
There exists a natural linear homeomorphism
$\H^R(\a_\C^*)\widehat\otimes L^2(B)\iso\H^R(\a_\C^*, L^2(B))$.
\end{lem}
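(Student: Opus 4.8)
The plan is to exhibit the natural bilinear map $\H^R(\a_\C^*)\times L^2(B)\to\H^R(\a_\C^*,L^2(B))$, $(\f,h)\mapsto\big(\lambda\mapsto\f(\lambda)h\big)$, show it is continuous, pass to the completed tensor product, and then prove the resulting map is a topological isomorphism. Continuity is immediate from the estimate $\interleave\f(\cdot)h\interleave_N=\interleave\f\interleave_N\|h\|_{L^2(B)}$, so one gets a continuous linear map $T:\H^R(\a_\C^*)\widehat\otimes L^2(B)\to\H^R(\a_\C^*,L^2(B))$ with dense image (finite sums $\sum_j\f_j(\lambda)h_j$ are dense, e.g. by expanding in an orthonormal basis of $L^2(B)$ and truncating). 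It remains to see that $T$ is injective and open onto its image, equivalently that it is a bijection onto all of $\H^R(\a_\C^*,L^2(B))$ with continuous inverse; since source and target are Fréchet, the open mapping theorem reduces everything to \emph{bijectivity}.

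For surjectivity, fix an orthonormal basis $(e_k)_{k\in\N}$ of $L^2(B)$ and, given $\p\in\H^R(\a_\C^*,L^2(B))$, set $\f_k(\lambda):=\langle\p(\lambda),e_k\rangle_{L^2(B)}$. Each $\f_k$ is holomorphic (weak holomorphicity of $\p$ suffices, or differentiate under the inner product) and satisfies $|\f_k(\lambda)|\le\|\p(\lambda)\|_{L^2(B)}\le C_N e^{R|\Ima\lambda|}(1+|\lambda|)^{-N}$, so $\f_k\in\H^R(\a_\C^*)$ with $\interleave\f_k\interleave_N\le\interleave\p\interleave_N$. One then checks that $\sum_k\f_k\otimes e_k$ converges in the completed tensor product and that $T$ sends it to $\p$. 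The cleanest way to package this is via the isomorphism $\H^R(\a_\C^*)\cong\D_R(\a^*)$ (Euclidean Paley--Wiener on $V=\a^*$) together with the standard identification $\D_R(\a^*)\widehat\otimes L^2(B)\cong C^\infty_c$-type vector-valued functions; but it can also be done by hand, estimating the tail $\big\|\sum_{k>K}\f_k\otimes e_k\big\|$ in the projective tensor norm by $\big(\sum_{k>K}\interleave\f_k\interleave_N^2\big)^{1/2}$-type bounds — here one uses that $\sum_k\interleave\f_k\interleave_N^2$ is controlled, which follows from Parseval applied pointwise together with a dominated-convergence argument in $\lambda$. Injectivity of $T$ is then clear, since $T\big(\sum_k\f_k\otimes e_k\big)=0$ forces all $\langle\p(\lambda),e_k\rangle=0$, hence $\p=0$, hence all $\f_k=0$.

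The main obstacle is the quantitative control of the series $\sum_k\f_k\otimes e_k$ in the projective (equivalently, since $\H^R(\a_\C^*)$ is nuclear, the injective) tensor topology: one must show not merely that the partial sums converge in $\H^R(\a_\C^*,L^2(B))$ — which is easy — but that they form a Cauchy sequence in the completed tensor product, so that the abstract limit actually lies in $\H^R(\a_\C^*)\widehat\otimes L^2(B)$ and maps to $\p$ under the \emph{continuous} extension $T$. This is where nuclearity is doing real work: it lets one represent elements of the completion by absolutely convergent series $\sum_k c_k\,\f_k\otimes h_k$ with $\sum|c_k|<\infty$, $(\f_k)$, $(h_k)$ bounded, and matching such a representation to the Fourier coefficients of $\p$ requires redistributing the decay in $\lambda$ (available uniformly in $k$ from the hypothesis on $\p$) against the $\ell^2$-decay in $k$. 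Once this bookkeeping is done, bijectivity is established and the open mapping theorem finishes the proof. I would write the argument using the $\D_R(\a^*)$-picture, since there the needed density and tensor-representation facts are classical (Fréchet nuclear spaces of test functions), making the nuclearity input transparent rather than ad hoc.
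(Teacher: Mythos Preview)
Your ultimately recommended route --- pass through the Euclidean Paley--Wiener isomorphism $\H^R(\a_\C^*)\cong\D_R(\a^*)$ and invoke the classical identification $\D_R(\a^*)\widehat\otimes L^2(B)\cong\D_R(\a^*,L^2(B))$ --- is exactly what the paper does. The paper adds the one concrete step you leave implicit: it writes down the $L^2(B)$-valued Fourier transform $\mathcal F:\D_R(\a^*,L^2(B))\to\H^R(\a_\C^*,L^2(B))$, checks (via weak holomorphy) that it lands in the right space, exhibits the inverse, and then uses Fr\'echet + open mapping. That is the whole proof; the direct orthonormal-basis argument you spend most of the proposal on is not used.

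One caution on your ``by hand'' sketch: the assertion that $\sum_k\interleave\f_k\interleave_N^2$ is finite ``by Parseval applied pointwise together with a dominated-convergence argument in $\lambda$'' is not justified as written. Parseval gives $\sum_k|\f_k(\lambda)|^2=\|\p(\lambda)\|_{L^2(B)}^2$ for each fixed $\lambda$, but $\interleave\f_k\interleave_N$ is a supremum over $\lambda$, and supremum and sum do not commute; different $\f_k$ could peak at different $\lambda$'s. To get genuine summability of the seminorms you would need something like a Sobolev embedding on the $\D_R(\a^*)$ side (control $\sup_x|g_k|$ by an $L^2$-Sobolev norm, which \emph{does} sum in $k$ by Parseval for derivatives of $g$) --- and that again routes you through the $\D_R$-picture. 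Moreover, even granting $\ell^2$-summability, the projective tensor norm is naturally an $\ell^1$-type quantity, so you would still need to invoke nuclearity (as you note) to close the gap. None of this is wrong in spirit, but the paper's argument sidesteps all of it by citing the ready-made isomorphism $\D_R(\a^*)\widehat\otimes L^2(B)\cong\D_R(\a^*,L^2(B))$ from Tr\`eves and composing with the vector-valued Fourier transform.
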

\begin{proof}
Letting $\D_R(\a^*,L^2(B))$
denote the space of smooth $L^2(B)$-valued functions on $\a^*$ with
support in the $R$-ball, we define a Fourier transform
$$
\mathcal F:\D_R(\a^*,L^2(B))\to \H^R(\a_\C^*, L^2(B))
$$
by
$$
(\mathcal F f)(\lambda):=\int_{\a^*} f(x)e^{-i\<x,\lambda\>}dx
$$
($L^2(B)$-valued integration). For all $v\in L^2(B)$ it holds that $\<\mathcal
Ff(\lambda),v\>=\mathcal F(\<f,v\>)(\lambda)$. Since the function
$x\longmapsto\<f(x),v\>$ is an element of $\D_R(\a^*)$ we see that
$\lambda\longmapsto\<\mathcal Ff(\lambda),v\>$ is holomorphic, i.e.
$\mathcal Ff$ is weakly holomorphic, hence holomorphic (as it takes
values in a Hilbert space). Furthermore we note that $\mathcal F$ has
an inverse:
$$
(\mathcal
F^{-1}\p)(x)=\int_{\a^*}\p(\lambda)e^{i\<x,\lambda\>}d\lambda.
$$
Continuity of $\mathcal F$ is easily checked and since the spaces in question are both Fréchet, $\mathcal F$ is a linear homeomorphism. The lemma
now follows from the fact that
$\D_R(\a^*,L^2(B))\cong\D_R(\a^*)\widehat\otimes L^2(B)$ (which is a consequence of \cite{52} Theorem 44.1) and that
$\D_R(\a^*)\cong \H^R(\a^*_\C)$ by the Euclidean Fourier transform.
\end{proof}

Now returning to the commuting diagram \eqref{eq:diag} we see that under the
identification of $\H^R(\a_\C^*, L^2(B))$ with
$\H^R(\a_\C^*)\widehat\otimes L^2(B)$, the map
$M_{P_D}:\H^R(\a_\C^*, L^2(B))\to \H^R(\a_\C^*, L^2(B))$ is
replaced by $M_{P_D}\otimes \id_{L^2(B)}$. And from the Euclidean
theory we know that $M_{P_D}:\H^R(\a_\C^*)\to \H^R(\a_\C^*)$ is a
homeomorphism onto its image, hence the same holds for
$M_{P_D}\otimes \id_{L^2(B)}$ (cf. \cite{52} Proposition 43.7). By restriction to $\H^R(\a_\C^*\times
B)_W$ we get:

\begin{lem}
The multiplication map $M_{P_D}:\H^R(\a_\C^*\times B)_W\to
\H^R(\a_\C^*\times B)_W$ is a homeomorphism onto its image.
\end{lem}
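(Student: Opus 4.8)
The plan is to deduce this simply by restricting the homeomorphism-onto-image $M_{P_D}\colon\H^R(\a_\C^*, L^2(B))\to\H^R(\a_\C^*, L^2(B))$ obtained just above to the closed subspace $\H^R(\a_\C^*, L^2(B))_W=\H^R(\a_\C^*\times B)_W$. The only point that genuinely requires an argument is that $M_{P_D}$ maps the Weyl-invariant subspace into itself; granting that, the rest is formal.

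First I would check this invariance. Let $\p\in\H^R(\a_\C^*, L^2(B))_W$ and fix $s\in W$, $x\in X$ and $\lambda\in\a_\C^*$. Since $P_D$ is $W$-invariant we have $P_D(s\cdot\lambda)=P_D(\lambda)$, whence
$$
\int_B e^{(is\cdot\lambda+\rho)A(x,b)}(P_D\p)(s\cdot\lambda,b)\,db
=P_D(\lambda)\int_B e^{(is\cdot\lambda+\rho)A(x,b)}\p(s\cdot\lambda,b)\,db
=P_D(\lambda)\int_B e^{(i\lambda+\rho)A(x,b)}\p(\lambda,b)\,db,
$$
using \eqref{eq:Weylinv} for $\p$ in the last step; the right-hand side equals $\int_B e^{(i\lambda+\rho)A(x,b)}(P_D\p)(\lambda,b)\,db$. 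Thus $P_D\p$ again satisfies \eqref{eq:Weylinv}, so $M_{P_D}$ restricts to a map $\H^R(\a_\C^*\times B)_W\to\H^R(\a_\C^*\times B)_W$.

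It then remains to invoke the elementary fact that a topological embedding stays a topological embedding under restriction to a subspace: if $T\colon E\to E$ is a homeomorphism onto its image and $F\subseteq E$ is a subspace with $T(F)\subseteq F$, then $T|_F\colon F\to F$ is a homeomorphism onto its image, with $T(F)$ given the subspace topology from $F$ — which, by transitivity of the subspace topology, coincides with the one inherited from $E$. Taking $E=\H^R(\a_\C^*, L^2(B))$, $F=\H^R(\a_\C^*, L^2(B))_W$ and $T=M_{P_D}$, and recalling that $\H^R(\a_\C^*\times B)_W$ is by construction nothing but $\H^R(\a_\C^*, L^2(B))_W$ with its subspace topology, we obtain the claim. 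I do not expect a real obstacle here: the substantive work lies in the preceding lemmas (identifying the space with the tensor product and invoking \cite{52} Proposition 43.7). The only thing needing a moment's care is the purely topological bookkeeping ensuring that "homeomorphism onto its image" is preserved under passage to the Weyl-invariant subspace, and this is immediate once the inclusion $M_{P_D}\big(\H^R(\a_\C^*\times B)_W\big)\subseteq\H^R(\a_\C^*\times B)_W$ is in hand.
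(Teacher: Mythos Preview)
Your proposal is correct and follows exactly the paper's approach: the paper's entire argument is the one sentence ``By restriction to $\H^R(\a_\C^*\times B)_W$ we get:'', and you have simply spelled out its two ingredients, namely the preservation of Weyl invariance (which the paper takes for granted from the commutative diagram \eqref{eq:diag}, whereas you verify it directly via the $W$-invariance of $P_D$) and the elementary fact that a topological embedding restricts to a topological embedding on any invariant subspace.
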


Finally, we need to transfer this to the inductive limit. For this we
need the following lemma which is an immediate generalization of \cite{47} Lemma III.5.13 to functions with Hilbert space
values:

\begin{lem}\label{lem:pollemma}
Assume $P:\a_\C^*\to \C$ is a nonzero polynomial and that
$\p:\a_\C^*\to L^2(B)$ is a holomorphic function such that $P\p\in
\H^R(\a_\C^*, L^2(B))$, then $\p\in \H^R(\a_\C^*, L^2(B))$.
\end{lem}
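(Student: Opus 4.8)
The plan is to transcribe to the $L^2(B)$-valued setting the classical one-variable estimate that dividing by a nonzero polynomial does not spoil a Paley--Wiener type bound; the only structural point is to make this estimate uniform in the base point, and that is achieved by using the top-degree homogeneous part of $P$. Throughout write $\|\cdot\|$ for $\|\cdot\|_{L^2(B)}$.

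First I would fix a good direction. Write $P=P_m+P_{m-1}+\dots+P_0$ in homogeneous components with $P_m\neq 0$ and $m=\deg P$. The restriction of $P_m$ to the real form $\a^*$ is not identically zero, so fix once and for all $\w\in\a^*$ with $|\w|=1$ and $P_m(\w)\neq 0$. For $\lambda_0\in\a_\C^*$ put $g(z):=\p(\lambda_0+z\w)$, an entire $L^2(B)$-valued function, and $p(z):=P(\lambda_0+z\w)$, a scalar polynomial whose coefficient of $z^m$ equals $P_m(\w)$ independently of $\lambda_0$; hence $p$ has exact degree $m$ with leading coefficient $a:=P_m(\w)\neq 0$.

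Next I would run the division lemma itself. Factoring $p(z)=a\prod_{j=1}^m(z-\alpha_j)$ and setting $q(z):=a\prod_{j=1}^m(1-\overline{\alpha_j}z)$, one has $q(0)=a$ and $|q(z)|=|p(z)|$ for $|z|=1$. Since $z\mapsto q(z)g(z)$ is entire with values in the Hilbert space $L^2(B)$, the Cauchy integral formula gives $q(0)g(0)=\frac{1}{2\pi}\int_0^{2\pi}q(e^{i\t})g(e^{i\t})\,d\t$, and therefore
$$
|P_m(\w)|\,\|\p(\lambda_0)\|\ \le\ \frac{1}{2\pi}\int_0^{2\pi}|p(e^{i\t})|\,\|g(e^{i\t})\|\,d\t\ \le\ \sup_{|z|=1}\big\|(P\p)(\lambda_0+z\w)\big\|.
$$
Now I feed in the hypothesis: for every $N$ there is $C_N$ with $\|(P\p)(\lambda)\|\le C_N e^{R|\Ima\lambda|}(1+|\lambda|)^{-N}$. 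For $|z|=1$ and $\w$ real of unit length one has $|\Ima(\lambda_0+z\w)|\le|\Ima\lambda_0|+1$ and $1+|\lambda_0+z\w|\ge\max(1,|\lambda_0|)\ge\frac{1}{2}(1+|\lambda_0|)$, so the displayed inequality gives, for every $N$,
$$
\|\p(\lambda_0)\|\ \le\ \frac{e^{R}\,2^{N}}{|P_m(\w)|}\,C_N\,e^{R|\Ima\lambda_0|}(1+|\lambda_0|)^{-N}.
$$
As $\p$ is holomorphic by assumption, this is exactly the statement $\p\in\H^R(\a_\C^*,L^2(B))$.

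I do not expect a genuine obstacle here: the one thing to watch is that the constant in the one-variable estimate must be independent of $\lambda_0$, which is precisely why the direction $\w$ is taken from the leading homogeneous part $P_m$ rather than from $P$; the vector-valued Cauchy formula and the elementary identity $|q|=|p|$ on $|z|=1$ are routine. In effect this is just the $L^2(B)$-valued copy of \cite{47} Lemma III.5.13.
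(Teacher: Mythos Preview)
Your argument is correct and is precisely the ``immediate generalization'' the paper has in mind: the paper gives no proof of its own but simply points to \cite{47} Lemma~III.5.13, and your write-up is exactly Helgason's proof (choose $\w$ with $P_m(\w)\neq 0$, use the Blaschke-type auxiliary polynomial $q$ with $|q|=|p|$ on $|z|=1$, and apply the Cauchy formula) carried out verbatim with $\|\cdot\|_{L^2(B)}$ in place of $|\cdot|$. There is nothing to add.
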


Restricting to $\H^R(\a_\C^*, L^2(B))_W=\H^R(\a_\C^*\times B)_W$ and referring to the commutative diagram \eqref{eq:diag} we get that if $f\in\D(X)$ is such that $Df\in\D_R(X)$, then $f\in
\D_R(X)$.

We now arrive at our main theorem:

\begin{thm}
Let $D\neq 0$ be an invariant differential operator, then
$D:\D(X)\to \D(X)$ is a homeomorphism onto its image.
\end{thm}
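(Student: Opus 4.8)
The plan is to reduce the statement about $D:\D(X)\to\D(X)$ to the already-established facts about $M_{P_D}$ on the Fréchet pieces $\H^R(\a_\C^*\times B)_W$, using the inductive-limit structure of $\D(X)$. First I would recall that $\D(X)=\bigcup_R\D_R(X)$ with the inductive limit topology, and that the topological Paley--Wiener theorem turns this into the identification $\D(X)\cong\H(\a_\C^*\times B)_W=\bigcup_R\H^R(\a_\C^*\times B)_W$, compatibly with $D$ and $M_{P_D}$ via the diagram \eqref{eq:diag}. Injectivity of $D$ is immediate: it is injective on each $\D_R(X)$ since $M_{P_D}$ is (by holomorphicity and $P_D\neq 0$), hence injective on the union. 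So the content is that $D$ is a homeomorphism onto its image $D(\D(X))$, i.e. that the inverse map $D(\D(X))\to\D(X)$ is continuous for the subspace topology inherited from $\D(X)$.

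The key step is to pin down what $D(\D(X))$ is and what its subspace topology looks like. Here is where Lemma \ref{lem:pollemma} does the essential work: if $f\in\D(X)$ and $Df\in\D_R(X)$, then already $f\in\D_R(X)$ — equivalently, $D^{-1}(\D_R(X))=\D_R(X)$, so that $D(\D(X))\cap\D_R(X)=D(\D_R(X))$. This "no support spreading" property is exactly what lets the inductive limit behave well: the image $D(\D(X))$, with its subspace topology from $\D(X)$, is the inductive limit of the subspaces $D(\D_R(X))$, each carrying the subspace topology from $\D_R(X)$. Indeed a set $U\subseteq D(\D(X))$ is open iff $U\cap\D_R(X)=U\cap D(\D_R(X))$ is open in $D(\D_R(X))$ for every $R$, precisely because $\D(X)$ is a strict inductive limit and $D(\D(X))\cap\D_R(X)=D(\D_R(X))$.

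Then I would finish as follows. On each level, transport through the Paley--Wiener homeomorphisms $\D_R(X)\iso\H^R(\a_\C^*\times B)_W$: the map $D:\D_R(X)\to D(\D_R(X))$ corresponds to $M_{P_D}:\H^R(\a_\C^*\times B)_W\to M_{P_D}\bigl(\H^R(\a_\C^*\times B)_W\bigr)$, which is a homeomorphism onto its image by the Lemma preceding Lemma \ref{lem:pollemma}. Hence $D:\D_R(X)\to D(\D_R(X))$ is a homeomorphism for every $R$, with the target in its subspace topology from $\D_R(X)$. Passing to the inductive limit, $D:\D(X)\to D(\D(X))$ is a continuous bijection whose inverse is continuous on each defining subspace $D(\D_R(X))$ and therefore continuous on the inductive limit $D(\D(X))$. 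Thus $D$ is a homeomorphism onto its image.

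I expect the main obstacle to be the bookkeeping in the second paragraph: verifying rigorously that the subspace topology on $D(\D(X))$ induced from $\D(X)$ coincides with the inductive limit topology of the $D(\D_R(X))$'s. This is the only place where one genuinely uses that the inclusions $\D_R(X)\hookrightarrow\D_{R'}(X)$ are topological embeddings with closed image (strictness of the inductive limit) together with the crucial set-theoretic identity $D(\D(X))\cap\D_R(X)=D(\D_R(X))$ supplied by Lemma \ref{lem:pollemma}; everything else is a formal consequence of the level-wise homeomorphisms already in hand.
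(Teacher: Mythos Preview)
Your proposal is correct and follows essentially the same route as the paper: both arguments hinge on the level-wise homeomorphism $D:\D_R(X)\to D(\D_R(X))$ together with the set-theoretic identity $D(\D(X))\cap\D_R(X)=D(\D_R(X))$ (the paper states the slightly more general $D(U)\cap\D_R(X)=D(U\cap\D_R(X))$) supplied by Lemma~\ref{lem:pollemma}, and then pass to the inductive limit. The only cosmetic difference is that the paper phrases the final step as showing $D$ is open while you phrase it as showing $D^{-1}$ is continuous on the inductive limit; your write-up is in fact more careful about the subspace-versus-inductive-limit topology issue on $D(\D(X))$, which the paper leaves implicit.
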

\begin{proof}
Since $D:\D_R(X)\to \D_R(X)$ is a homeomorphism onto its image,
it is clear that $D$ is injective and continuous. We just need to
show that it's open. For this, let $U\subseteq \D(X)$ be open, i.e.
$U\cap D_R(X)$ is open for all $R$. We need to show that $D(U)\cap
\D_R(X)$ is open for all $R$. Claim: $D(U)\cap \D_R(X)=D(U\cap
\D_R(X))$. The inclusion ``$\supseteq$'' is clear, since $D$
decreases support. The inclusion ``$\subseteq$'' is a consequence of
Lemma \ref{lem:pollemma}. But the right hand side $D(U\cap \D_R(X))$
is open in $\D_R(X)$, and thus the result follows.
\end{proof}

Let $\D'(X)$ denote the set of distributions over $X$, i.e. the dual of $\D(X)$. A simple Hahn-Banach argument (see e.g. \cite{55} p. 236) yields
\begin{cor}
$D:\D'(X)\to \D'(X)$ is surjective.
\end{cor}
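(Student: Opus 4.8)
The plan is to dualize the statement of the previous theorem. We have just shown that $D:\D(X)\to \D(X)$ is a linear homeomorphism onto its image $D(\D(X))\subseteq \D(X)$, a closed subspace. Surjectivity of the transpose $D^t = D$ on $\D'(X)$ is, by a standard functional-analytic duality, equivalent to injectivity of $D$ together with the fact that $D$ has closed range (or more precisely, that $D$ is a topological isomorphism onto its image). So the first step is simply to invoke the Hahn--Banach theorem in the following form: if $T:E\to F$ is a continuous linear injection between locally convex spaces that is a homeomorphism onto its (not necessarily closed) image, then the transpose $T^t:F'\to E'$ is surjective. This is exactly the content of \cite{55} p.~236, and the hypotheses are met by the preceding theorem.

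Concretely, I would argue as follows. Let $u\in \D'(X)$ be an arbitrary distribution; we seek $v\in \D'(X)$ with $Dv = u$, i.e. $\langle Dv,\f\rangle = \langle v, D\f\rangle = \langle u,\f\rangle$ for all $\f\in\D(X)$ (here I use that $D$ is, up to the usual sign conventions built into the pairing, its own formal transpose as an invariant differential operator; if one prefers, replace $D$ by its formal adjoint $D^*$, which is again a nonzero invariant differential operator, so the same theorem applies to it). Define a linear functional $\mu$ on the subspace $D(\D(X))\subseteq \D(X)$ by $\mu(D\f):=\langle u,\f\rangle$. This is well-defined because $D$ is injective, and it is continuous on $D(\D(X))$ because $D$ is a homeomorphism onto its image: the inverse map $D\f\mapsto\f$ is continuous, so $\mu$ is the composition of that inverse with the continuous functional $u$. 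By the Hahn--Banach theorem, $\mu$ extends to a continuous linear functional $v\in\D'(X)$ on all of $\D(X)$. Then $\langle v, D\f\rangle = \mu(D\f) = \langle u,\f\rangle$ for all $\f$, i.e. $Dv = u$.

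The only point requiring care — and it is genuinely routine here, which is why the corollary is stated without proof — is the continuity of $\mu$ on the image subspace, and for this the word ``homeomorphism'' (rather than merely ``continuous bijection'') in the previous theorem is essential: it is precisely what guarantees that $D^{-1}:D(\D(X))\to\D(X)$ is continuous, hence that $\mu = u\circ D^{-1}$ is continuous. There is no obstacle in extending $\mu$ from the subspace to the whole space, since Hahn--Banach applies to any locally convex space. Thus the entire content of the corollary has already been packaged into the homeomorphism statement of the main theorem, and nothing further is needed beyond citing the standard duality argument.
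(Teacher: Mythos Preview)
Your argument is correct and is precisely the Hahn--Banach argument the paper invokes (citing \cite{55} p.~236) without writing out. The only wrinkle is that an invariant differential operator is not in general its own formal transpose, so the argument must be run with $D^t$ in place of $D$ as you note parenthetically; since $D^t$ is again a nonzero invariant differential operator the main theorem applies to it, and the rest goes through verbatim.
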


\end{document}